\documentclass[10pt,twoside]{siamart1116}

\usepackage[english]{babel}
\usepackage{graphicx,epstopdf,epsfig}
\usepackage{amsfonts,fancyhdr,graphics,amsmath,amssymb}
\usepackage{mathtools}
\usepackage{nicematrix}
\usepackage{bm}
\usepackage{microtype}
\usepackage[numbers]{natbib}

\newcommand{\Names}{Hemant K. Mishra}
\newcommand{\Title}{Lidskii theorem for directional derivatives of symplectic eigenvalues}

\newtheorem{remark}[theorem]{Remark}

\renewcommand{\theequation}{\thesection.\arabic{equation}}
\renewtheorem{theorem}{Theorem}[section]

\begin{document}

\bibliographystyle{plain}

% Leave these commented lines here, as requested in the ELA template
%\input{ELAheader-template.tex}
\setcounter{page}{1}
\thispagestyle{empty}

\title{\Title}

% \title{\Title\thanks{Received by the editors on \DoS.
% Accepted for publication on \DoA.
% Handling Editor: \HE. Corresponding Author: \CA}}

\author{Hemant K.\ Mishra\thanks{Department of Mathematics \& Computing,
Indian Institute of Technology (ISM) Dhanbad, India
(hemantmishra1124@iitism.ac.in).}}

\markboth{\Names}{\Title}
\maketitle

\begin{abstract}
    In this paper, we present a refinement of the symplectic Lidskii theorem for directional derivatives of symplectic eigenvalues, which is inspired by the work of Sendov \textit{[Electron. J. Linear Algebra \textbf{41} (2025), 338--341]}.
    We show that for a $2n \times 2n$ real positive definite matrix $A$ and symmetric matrices $H, K$
    \begin{align*}
        d'(A;H+K)-d'(A;H)
              \prec_{\pi_A}
              d'(A;K).
    \end{align*}
    Here $\prec_{\pi_A}$ represents block-majorization corresponding to the partition $\pi_A$ of $\{1,\ldots, n\}$ given by the equality blocks of symplectic eigenvalues of $A$.
    We know that \emph{Hermitianization} of a symmetric matrix occurs in the directional derivative expression of symplectic eigenvalues.
    We establish a componentwise inequality comparing the symplectic eigenvalues of a positive definite matrix with the ordinary eigenvalues of its associated {Hermitianization}, and also characterize the equality case.
    By using the aforementioned inequality, we then show that the symplectic Lidskii theorem follows from the directional derivative Lidskii theorem.
\end{abstract}

\begin{keywords}
Symplectic eigenvalues, directional derivatives, Lidskii theorem,
majorization, Williamson's theorem.
\end{keywords}

\begin{AMS}
15A18, 15A45.
\end{AMS}

\section{Introduction}\label{sec:introduction}
    The classical Lidskii theorem describes perturbation of ordered spectrum of real symmetric matrices.
    More concretely, if $X,Y \in \mathbb{R}^{n\times n}$ are symmetric matrices then
    \begin{equation}\label{eq:intro-classical-lidskii}
        \lambda(X+Y)-\lambda(X)\prec \lambda(Y),
    \end{equation}
    where $\lambda(\cdot)$ denotes the eigenvalue vector in non-decreasing order and $\prec$ denotes majorization. 
    Sendov~\cite{Sendov2025} recently established a sharper form of Lidskii theorem in terms of the directional derivatives of ordered eigenvalues:  
    if $X,Y, Z \in \mathbb{R}^{n\times n}$ are symmetric matrices and $\pi(Z)$ is the partition of $\{1,\ldots,n\}$ generated by the equal eigenvalues of $Z$ in $\lambda(Z)$, then
    \begin{equation}\label{eq:intro-sendov}
        \lambda'(Z;X+Y)-\lambda'(Z;X) \prec_{\pi(Z)}\lambda'(Z;Y).
    \end{equation}
    Here $\prec_{\pi(Z)}$ denotes \emph{block-wise} majorization, which is stronger than ordinary majorization. 
    The relation in~\eqref{eq:intro-sendov} allows permutations only inside the eigenspaces associated with the repeated eigenvalues of the base matrix $Z$.  
    The classical Lidskii theorem is recovered by taking the base matrix $Z$ to be the $n \times n$ identity matrix $I_n$ in~\eqref{eq:intro-sendov}, which simply follows due to the fact that  $\lambda'(I_n;X)=\lambda(X)$.
    Sendov's argument is strikingly economical.  
    On each multiplicity block, the directional derivatives of the eigenvalues are precisely the eigenvalues of the compression of the direction matrix to the corresponding eigenspace.  
    The result \eqref{eq:intro-sendov} is obtained by applying the classical Lidskii theorem independently on every block. 

    The purpose of this paper is to carry this philosophy into symplectic spectral theory.  
    Let $J=\begin{psmallmatrix}0&I_n\\-I_n&0\end{psmallmatrix}$.
    Recall that a matrix \(S\in\mathbb{R}^{2n\times 2n}\) is symplectic if \(S^{T}JS=J\). 
    Williamson's theorem \cite{Williamson} states that for every positive definite matrix \(A\in\mathbb{R}^{2n\times 2n}\), there exists a symplectic matrix \(M\) such that
        \begin{align} \label{eq:will-diag-form}
            M^{T}AM=\begin{pmatrix}D&0\\0&D\end{pmatrix},
        \end{align}
    where $D$ is a uniquely determined diagonal matrix with diagonal entries \( 0< d_1(A) \leq \cdots \leq d_n(A)\), which are known as the symplectic eigenvalues of \(A\).
    Denote by $d(A)$ the vector consisting of the symplectic eigenvalues of $A$ in non-decreasing order.

    Jain and Mishra~\cite{mishraderivatives} proved the following symplectic analogue of Lidskii theorem holds for all positive definite matrices $A,B \in \mathbb{R}^{2n \times 2n}$
    \begin{equation}\label{eq:intro-symp-lidskii}
       d(A+B)-d(A)\prec^w d(B),
    \end{equation}
    where $\prec^w$ denotes weak supermajorization.  
    It is natural to ask whether~\eqref{eq:intro-symp-lidskii} also admits a directional-derivative-type refinement.
    In principle, it should be possible to get a refinement of the symplectic Lidskii theorem due to the availability of the necessary tools in the symplectic setting---explicit directional derivative of ordered symplectic eigenvaluess and Lidskii theorem.
    However, the symplectic setting presents several structural obstacles which make a direct adaptation of Sendov's proof impossible.  
    
    There are three main obstacles worth isolating because they also explain the statement and the proof of our main results.
    Let $A, H \in \mathbb{R}^{2n\times 2n}$ such that $A$ is positive definite and $H$ is symmetric.
    The directional derivative of the \(j\)-th symplectic eigenvalue at $A$ in the direction \(H\) is defined by
    \begin{align}\label{eq:dir-deri-symplectic}
        d_j'(A;H) = \lim_{t \searrow 0}
        \frac{d_j(A+tH)-d_j(A)}{t}.
    \end{align}
    The limit in~\eqref{eq:dir-deri-symplectic} exists and its explicit description is given as follows.
    Suppose that $d_p(A)=\cdots=d_q(A)$ is a symplectic eigenvalue of multiplicity \(r=q-p+1\). 
    A symplectic eigenbasis associated with this eigenspace determines a \(2r\times 2r\) \emph{symplectic-compression} of \(H\), which we write in block form as
    \begin{align}
        \begin{pmatrix}
            X&Y\\
            Y^{T}&Z
        \end{pmatrix}.
    \end{align}
    \sloppy
    The \(r\) directional derivatives corresponding to this repeated symplectic eigenvalue are obtained from the ordinary eigenvalues of the corresponding \(r\times r\) Hermitian matrix
    \begin{align}
    \mathcal{H}_{p,q}(H)\coloneqq\frac12\left[
        X+Z
        +\mathrm{i}\bigl(Y-Y^{\top}\bigr)
        \right]
    \end{align}
    arranged in the non-decreasing order by
    \begin{align}
        \bigl(d_p'(A;H),\ldots,d_q'(A;H)\bigr)= \lambda^\uparrow\left( \mathcal{H}_{p,q}(H)
                \right).
    \end{align}

    \vspace{0.5cm}
    
    \noindent \textbf{$1.$ Directional derivatives are ordinary eigenvalues rather than symplectic eigenvalues:}
    We know that for ordinary eigenvalues, the directional derivative $\lambda'(Z;X)$ on a multiplicity block consists of the eigenvalues of the compression of the direction matrix $X$ on the corresponding eigenspace of $Z$.
    The directional derivative for symplectic eigenvalues $d'(A;H)$ on a multiplicity block is given by the eigenvalues of a complex Hermitian matrix $\mathcal{H}(H)$ corresponding to $H$.
    Thus, the local symplectic problem is converted into an ordinary Hermitian spectral problem on every block.  
    This presents a challenge to meaningfully bring symplectic eigenvalues into the picture to justify refinement of the symplectic Lidskii theorem.
    \vspace{0.5cm}
    
    \noindent \textbf{$2.$ Local and global majorization relations are different:}
    Sendov's local refinement uses ordinary majorization, which is compatible with the global majorization relation in Lidskii theorem.  
    The symplectic Lidskii theorem~\eqref{eq:intro-symp-lidskii} uses weak supermajorization. 
    A satisfactory directional derivative result, which would naturally contain block majorization relations, should therefore explain how it descends to the weak-supermajorization relation satisfied by the symplectic eigenvalues of positive definite directional matrices and their sum.

    \vspace{0.5cm}
    
    \noindent \textbf{$3.$ There is no trivial base point:}
    Sendov recovers classical Lidskii by setting the base matrix $Z$ in~\eqref{eq:intro-sendov} equal to the identity matrix $I_n$.  
    For symplectic eigenvalues, the directional derivative $d'(I_{2n};B)$ is given by the eigenvalue vector $\lambda^\uparrow(\mathcal H(B))$ of the $n \times n$ Hermitian matrix corresponding to $B$.
    For a general positive definite matrix $B$, this vector is generally different from $d(B)$, and hence a mere formal specialization of the directional derivative relation would not yield the symplectic Lidskii theorem.   
    A separate comparison between $d(B)$ and the eigenvalues of $\mathcal H(B)$ is required, which is one of the key results of our work.

    Our first result establishes the exact symplectic counterpart of Sendov’s block-majorization theorem. We then prove a componentwise comparison between the symplectic spectrum of a positive definite matrix and the spectrum of its associated Hermitianization, together with a characterization of the equality case. These results provide the missing bridge between ordinary majorization arising from the local Hermitian problem and weak supermajorization of the global symplectic spectrum. 
    As a consequence, we show that the symplectic Lidskii theorem follows directly from our directional-derivative theorem.
    Interestingly, the relationship with Sendov's result goes beyond analogy. 
    By embedding a real symmetric matrix \(Z\) in a positive definite matrix of the form \((Z+tI_n)\oplus(Z+tI_n)\), our directional theorem reduces exactly to Sendov's theorem. 
    Thus, the result obtained here simultaneously contains the ordinary directional Lidskii theorem and refines the symplectic Lidskii inequality.
\section{Main results and proofs}
    Let us set some notations and recall some basic definitions and results that will be helpful in the further development.
    For $x\in\mathbb{R}^n$, let $x^\uparrow$ and $x^\downarrow$ denote the vectors obtained by arranging the entries of $x$ in non-decreasing and non-increasing order, respectively.  
    For $x,y\in\mathbb{R}^n$, we say $x$ is majorized by $y$ and write $x \prec y$ if
    \begin{align}
       \sum_{j=1}^k x_j^\downarrow
       \leq
       \sum_{j=1}^k y_j^\downarrow,
       \qquad k=1,\ldots,n-1,
    \end{align}
    and equality holds when $k=n$.  
    We say $x$ is weakly supermajorized by $y$ and write $x \prec^w y$ if
    \begin{equation}\label{eq:weak-supermajorization}
       \sum_{j=1}^k x_j^\uparrow
       \geq
       \sum_{j=1}^k y_j^\uparrow,
       \qquad k=1,\ldots,n.
    \end{equation}
    Let $\pi=\{I_1,\ldots,I_s\}$ be a partition of $\{1,\ldots,n\}$.  
    We say $x$ is block-majorized by $y$ and write $x \prec_\pi y$ if
    \begin{align}
       x_{I_\alpha}\prec y_{I_\alpha},
       \qquad \alpha=1,\ldots,s.
    \end{align}
    Here $x_{I_\alpha}$ denotes the sub-vector of $x$ consisting of elements of $x$ corresponding to the indices in $I_\alpha$.
    The relation $x \prec_\pi y$ is equivalent to $x$ belonging to the convex hull of vectors obtained from $y$ by permutations that preserve every block of $\pi$.  
    It will serve us well to note the following easy to verify implications
    \begin{equation}\label{eq:pi-implies}
       x\prec_\pi y
       \quad\Longrightarrow\quad
       x\prec y
       \quad\Longrightarrow\quad
       x\prec^w y.
    \end{equation}
    Throughout the paper, we fix a positive definite matrix $A\in\mathbb{R}^{2n \times 2n}$ and choose a symplectic matrix $M$ such that~\eqref{eq:will-diag-form} holds.
    Suppose $A$ has $s$ distinct symplectic eigenvalues $\mu_1 < \cdots < \mu_s$.
    Define a partition $\pi_A \coloneqq \{I_1,\ldots, I_s\}$ corresponding to the multiplicity blocks of the symplectic eigenvalues, i.e.,
    \begin{align}
        I_\alpha = \{j: d_j(A) = \mu_\alpha\}, \qquad \alpha =1,\ldots, s.
    \end{align}
    Let the columns of $M$ be given by $u_1,\ldots, u_n, v_1, \ldots, v_n$ in order.
    For each multiplicity block $I_\alpha$ define
    \begin{align}
       M_\alpha
       \coloneqq[u_j:j\in I_\alpha,\ v_j:j\in I_\alpha].
    \end{align}
    For every symmetric matrix $H\in \mathbb{R}^{2n \times 2n}$, we call $M_\alpha^{\top}HM_\alpha$ the \emph{symplectic-compression} of $H$ by $M_\alpha$.
    Suppose the block structure of the symplectic compression is given by
    \begin{equation}\label{eq:block-compression}
     M_\alpha^{\top}HM_\alpha
        =
         \begin{bmatrix}
            H_{11}^{(\alpha)}&H_{12}^{(\alpha)}\\
            (H_{12}^{(\alpha)})^{\top}&H_{22}^{(\alpha)}
         \end{bmatrix},
    \end{equation}
    where each block has size $|I_\alpha| \times |I_\alpha|$.
    Consider the \emph{Hermitianization} of the symplectic compression $M_\alpha^{\top}HM_\alpha$
        \begin{equation}\label{eq:local-hermitianization}
             \mathcal H_\alpha(H)
                 \coloneqq \frac12\left[
                 H_{11}^{(\alpha)}+H_{22}^{(\alpha)}
                 +\mathrm{i}\left(
                 H_{12}^{(\alpha)}-(H_{12}^{(\alpha)})^{\top}
                 \right)
                 \right].
        \end{equation}
    Here $\mathrm{i} \coloneqq \sqrt{-1}$.
    The map $H\mapsto\mathcal H_\alpha(H)$ is linear.
    In a prior work \cite[Theorem~3.6]{mishrafirst}, we established that\footnote{There is a minus sign that appears in the directional derivative expression $d_j^{'}(A;H)$ in \cite{mishrafirst}. Our presentation absorbs the minus sign inside the eigenvalue argument.}
    \begin{equation}\label{eq:mishra-block-formula}
       d'(A;H)_{I_\alpha}
       =\lambda^\uparrow\!\left(\mathcal H_\alpha(H)\right).
    \end{equation}

    Our first main result is the following direct symplectic analogue of Sendov's theorem~\eqref{eq:intro-sendov}.
    \begin{theorem}[Symplectic Lidskii for directional derivatives]\label{thm:directional-lidskii}
        Let $A \in \mathbb{R}^{2n \times 2n}$ be a positive definite matrix.
        For all symmetric matrices $H,K \in \mathbb{R}^{2n \times 2n}$, we have
        \begin{equation}\label{eq:main-directional}
              d'(A;H+K)-d'(A;H)
              \prec_{\pi_A}
              d'(A;K).
        \end{equation}
        % Consequently,
        % \begin{equation}\label{eq:main-directional-ordinary}
        %   d'(A;X+Y)-d'(A;Y)
        %   \prec
        %   d'(A;X),
        % \end{equation}
        % and therefore
        % \begin{equation}\label{eq:main-directional-weak}
        %   d'(A;X+Y)-d'(A;Y)
        %   \prec^w
        %   d'(A;X).
        % \end{equation}
    \end{theorem}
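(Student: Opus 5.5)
The plan is to reduce the statement block by block to the classical Lidskii theorem for Hermitian matrices, exactly as in Sendov's argument. By the definition of $\prec_{\pi_A}$, we must show for each $\alpha=1,\ldots,s$ that
\begin{align*}
    d'(A;H+K)_{I_\alpha}-d'(A;H)_{I_\alpha}\prec d'(A;K)_{I_\alpha}.
\end{align*}
Note that subtraction of vectors commutes with restriction to the index set $I_\alpha$, so the left side is the restriction of the vector appearing in~\eqref{eq:main-directional}.

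First, fix $\alpha$ and apply the block formula~\eqref{eq:mishra-block-formula} to each of the three directions $H$, $K$ and $H+K$. This gives
\begin{align*}
    d'(A;H)_{I_\alpha}&=\lambda^\uparrow(\mathcal H_\alpha(H)),\\
    d'(A;K)_{I_\alpha}&=\lambda^\uparrow(\mathcal H_\alpha(K)),\\
    d'(A;H+K)_{I_\alpha}&=\lambda^\uparrow(\mathcal H_\alpha(H+K)).
\end{align*}
The essential point is that the same symplectic diagonalizer $M$, and hence the same $M_\alpha$, is used for all three directions. The formula holds for any fixed Williamson diagonalizer of $A$, since~\eqref{eq:mishra-block-formula} is stated after fixing $M$. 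Next, use linearity of $H\mapsto\mathcal H_\alpha(H)$. The symplectic compression $M_\alpha^\top(\cdot)M_\alpha$ is linear, and extracting the blocks and forming~\eqref{eq:local-hermitianization} is linear. Hence
\begin{align*}
    \mathcal H_\alpha(H+K)=\mathcal H_\alpha(H)+\mathcal H_\alpha(K),
\end{align*}
which is a sum of two $|I_\alpha|\times|I_\alpha|$ Hermitian matrices.

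Then invoke the classical Lidskii theorem~\eqref{eq:intro-classical-lidskii} in its complex Hermitian form, which holds for Hermitian matrices with the same proof. With $X=\mathcal H_\alpha(H)$ and $Y=\mathcal H_\alpha(K)$, it gives
\begin{align*}
    \lambda^\uparrow(X+Y)-\lambda^\uparrow(X)\prec\lambda(Y).
\end{align*}
If one prefers to stay with real symmetric matrices, one can instead pass to the real $2|I_\alpha|\times 2|I_\alpha|$ representation of $X=P+\mathrm{i}Q$, namely
\begin{align*}
    \begin{pmatrix}P&-Q\\Q&P\end{pmatrix}.
\end{align*}
Its eigenvalues are those of $X$, each doubled, and doubling every entry preserves the relevant majorization. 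Since majorization depends only on the multiset of entries of the right-hand vector, the ordering of $\lambda(Y)$ is irrelevant, and $\lambda^\uparrow(Y)=d'(A;K)_{I_\alpha}$. Substituting the three identities from the first step then yields the blockwise majorization for $I_\alpha$. Since $\alpha$ was arbitrary, \eqref{eq:main-directional} follows.

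The only step needing care is the first one: making sure that~\eqref{eq:mishra-block-formula} is applied with one common $M_\alpha$ for all directions, so that linearity is legitimate. I would also check that the vectors on both sides are indexed in non-decreasing order within each block, in the same way for every direction. Once this is secured, the remaining steps are routine.
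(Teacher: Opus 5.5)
Your proposal is correct and follows the paper's proof: it restricts to each block $I_\alpha$, applies the block formula with a common $M_\alpha$, uses linearity of $H\mapsto\mathcal H_\alpha(H)$, and invokes the classical Lidskii theorem on the resulting Hermitian matrices. Your extra remarks, namely the fixed diagonalizer and the complex Hermitian (or realified) form of Lidskii, make explicit what the paper uses without comment.
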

    \begin{proof}
        By using the relation \eqref{eq:mishra-block-formula}, linearity of the map $H \mapsto \mathcal{H}_\alpha(H)$, and the classical Lidskii theorem, we get
        \begin{align}
            d'(A;H+K)_{I_\alpha}- d'(A;H)_{I_\alpha}
                &=\lambda^\uparrow\!\left(\mathcal H_\alpha(H+K)\right)-\lambda^\uparrow\!\left(\mathcal H_\alpha(H)\right) \\
                &= \lambda^\uparrow\!\left(\mathcal H_\alpha(H)+\mathcal H_\alpha(K)\right)-\lambda^\uparrow\!\left(\mathcal H_\alpha(H)\right) \\
                &\prec \lambda^\uparrow\!\left(\mathcal H_\alpha(K)\right) \\
                &= d'(A;K)_{I_\alpha}.
        \end{align}
        This completes the proof.
    \end{proof}
    \begin{remark}
        It is interesting to note that
        Sendov's directional derivative result~\eqref{eq:intro-sendov} is a special case of Theorem~\ref{thm:directional-lidskii}.
        Indeed, let $X, Y, Z \in \mathbb{R}^{n \times n}$ be symmetric matrices.
        Choose $t > 0$ large enough so that $A \coloneqq (Z \oplus Z) + t I_{2n}$ is positive definite.
        Let $H \coloneqq X \oplus X$ and $K \coloneqq Y \oplus Y$.
        We have $d_j(A) = \lambda_j^\uparrow(Z)+t$ for all $j$, so that $\pi_A = \pi(Z)$.
        Also, $d'(A; H+K)= \lambda'(Z; X+Y)$, $d'(A; H)= \lambda'(Z; X)$, and $d'(A; K)= \lambda'(Z; Y)$.
        The symplectic directional derivative relation~\eqref{eq:main-directional} then reduces to \eqref{eq:intro-sendov}.
    \end{remark}
    An immediate consequence of the previous result is the following differentiability property for simple symplectic eigenvalues established in \cite{mishraderivatives}.
    \begin{corollary}\label{cor:simple-spectrum}
        If the symplectic eigenvalues of $A$ are all distinct, then for every symmetric
            $H, K\in\mathbb{R}^{2n \times 2n}$,
            \begin{equation}\label{eq:additivity-simple}
               d'(A;H+K)=d'(A;H)+d'(A;K).
            \end{equation}
    \end{corollary}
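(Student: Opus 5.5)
The plan is to read off the corollary from Theorem~\ref{thm:directional-lidskii} by exploiting the fact that, when all symplectic eigenvalues are distinct, every block of $\pi_A$ is a singleton. In that case block-majorization forces equality entrywise, and additivity follows at once.

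\textbf{Step 1: the partition is trivial.} If $d_1(A)<\cdots<d_n(A)$, then $s=n$ and $I_\alpha=\{\alpha\}$ for every $\alpha$, so $\pi_A=\{\{1\},\ldots,\{n\}\}$.

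\textbf{Step 2: apply the theorem blockwise.} By Theorem~\ref{thm:directional-lidskii}, on each block we have
\begin{equation*}
\bigl(d'(A;H+K)-d'(A;H)\bigr)_{\{\alpha\}} \prec d'(A;K)_{\{\alpha\}}.
\end{equation*}
Both sides are scalars. Majorization between vectors of length one is exactly the equality condition in the definition (the case $k=n=1$), so each such relation is an equality of reals. This gives
\begin{equation*}
d_\alpha'(A;H+K)-d_\alpha'(A;H)=d_\alpha'(A;K)
\end{equation*}
for every $\alpha$, which is \eqref{eq:additivity-simple}.

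\textbf{Alternative check.} One can also argue directly from \eqref{eq:mishra-block-formula}. Here $\mathcal H_\alpha(H)$ is a $1\times 1$ matrix, and it is real because $H_{12}^{(\alpha)}-(H_{12}^{(\alpha)})^{\top}=0$ for scalars. Hence $d_\alpha'(A;H)=\mathcal H_\alpha(H)$, and additivity follows from the linearity of $H\mapsto\mathcal H_\alpha(H)$.

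There is no real obstacle. The only point to note is that majorization of length-one vectors means equality, which comes straight from the definition of $\prec$.
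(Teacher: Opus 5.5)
Your proof is correct and matches the paper's argument: when the symplectic eigenvalues of $A$ are distinct, every block of $\pi_A$ is a singleton, so the block-majorization in Theorem~\ref{thm:directional-lidskii} reduces to entrywise equality. Your alternative check, using \eqref{eq:mishra-block-formula} with real $1\times 1$ Hermitianizations and the linearity of $H\mapsto\mathcal H_\alpha(H)$, is also valid.
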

    \begin{proof}
        If all symplectic eigenvalues of $A$ are distinct, then every block of $\pi_A$ is a singleton set.  Majorization on a one-dimensional block is equality.  Theorem~\ref{thm:directional-lidskii} thus yields~\eqref{eq:additivity-simple}.        
    \end{proof}
    
     We now present an analysis to show that the block-majorization relation \eqref{eq:main-directional} is finer than the symplectic Lidskii theorem~\eqref{eq:intro-symp-lidskii}.
     We begin by declaring some notations and proving some preliminary results.
     Let
     \begin{align}
         W_+\coloneqq\frac1{\sqrt2}\begin{bmatrix}I_n\\-\mathrm{i}I_n\end{bmatrix},
         \qquad
         W_-\coloneqq\frac1{\sqrt2}\begin{bmatrix}I_n\\\mathrm{i}I_n\end{bmatrix},
         \qquad
         Q\coloneqq[W_+\ W_-].
        \end{align}
        Then $Q$ is unitary and it is easy to verify that 
        \begin{equation}\label{eq:diag-iJ}
             S \coloneqq Q^*(\mathrm{i}J)Q
           =\begin{bmatrix}I_n&0\\0&-I_n\end{bmatrix}
           .
        \end{equation}
        % Also, for every symmetric matrix $H \in \mathbb{R}^{2n \times 2n}$, recall that ${\mathcal H(H)}=\frac12\left[H_{11}+H_{22}-\mathrm{i}(H_{12}-H_{12}^{\top})\right]$. It is easy to verify that the leading $n \times n$ block of $\widehat H \coloneqq Q^* H Q$ is the complex conjugate of ${\mathcal H(H)}$; i.e.,
        % we have
        % \begin{align}\label{eq:hat-h-blocks}
        %     \widehat H = 
        %     \begin{bNiceMatrix}[margin]
        %       \frac12\left[H_{11}+H_{22}+\mathrm{i}(H_{12}-H_{12}^{\top})\right] & \Block{1-1}{\Large *} \\
        %       \Block{1-1}{\Large *} & \Block{1-1}{\Large *}
        %     \end{bNiceMatrix}.
        % \end{align}
        We shall use the notation $\# \mathcal{S}$ to denote the cardinality of a set $\mathcal{S}$.
    \begin{lemma}\label{lem:inertia-count}
            Let $H\in\mathbb{R}^{2n \times 2n}$ be a positive definite matrix.
            For every $t>0$ that is not a symplectic eigenvalue of $H$, we have
            \begin{equation}\label{eq:inertia-count}
               n_-\!\left(\widehat H-tS\right)
               =\#\{j:d_j(H)<t\},
            \end{equation}
            where $S$ is defined in \eqref{eq:diag-iJ} and $n_-(T)$ denotes the number of negative eigenvalues of a Hermitian matrix $T$, counted with multiplicity.
        \end{lemma}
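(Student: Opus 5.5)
Here $\widehat H = Q^*HQ$, and $S = Q^*(\mathrm{i}J)Q$, so $\widehat H - tS = Q^*(H - t\,\mathrm{i}J)Q$. The plan is to compute the inertia of $H - t\,\mathrm{i}J$ by passing to Williamson's normal form with a congruence, which preserves inertia by Sylvester's law.

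\textbf{Reduction.} Since $Q$ is unitary, $n_-(\widehat H - tS) = n_-(H - t\,\mathrm{i}J)$. Choose a symplectic $M$ with $M^\top H M = D\oplus D$, where $D = \mathrm{diag}(d_1(H),\ldots,d_n(H))$. The matrix $M$ is real and satisfies $M^\top J M = J$, so
\begin{align*}
M^\top (H - t\,\mathrm{i}J) M = (D\oplus D) - t\,\mathrm{i}J .
\end{align*}
This is a congruence $T\mapsto M^* T M$ of Hermitian matrices, since $M$ is real and invertible. By Sylvester's law of inertia, $n_-(H - t\,\mathrm{i}J) = n_-\bigl((D\oplus D) - t\,\mathrm{i}J\bigr)$.

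\textbf{Computing the inertia of the normal form.} After the permutation similarity that pairs coordinates $j$ and $n+j$, the matrix $(D\oplus D) - t\,\mathrm{i}J$ becomes a direct sum of $2\times 2$ Hermitian blocks
\begin{align*}
B_j = \begin{pmatrix} d_j & -\mathrm{i}t \\ \mathrm{i}t & d_j \end{pmatrix}, \qquad j=1,\ldots,n .
\end{align*}
The eigenvalues of $B_j$ are $d_j \pm t$, because the off-diagonal Hermitian part has eigenvalues $\pm t$ and commutes with $d_j I_2$. Since $d_j>0$ and $t>0$, we always have $d_j+t>0$. The other eigenvalue $d_j-t$ is negative exactly when $d_j<t$, and it is never zero because $t$ is not a symplectic eigenvalue. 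Summing over the blocks gives $n_-=\#\{j: d_j(H)<t\}$.

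\textbf{Main obstacle.} The only delicate point is justifying the congruence step in the complex Hermitian setting. One must check that $M^\top(\mathrm{i}J)M = \mathrm{i}J$ uses only the real symplectic identity, and that $M^* = M^\top$ because $M$ is real. Once this is verified, everything else is direct computation. The hypothesis that $t$ is not a symplectic eigenvalue only ensures nonsingularity, and it is not needed for the count of negative eigenvalues itself.
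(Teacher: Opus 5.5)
Your proof is correct. It uses a different congruence and a different key fact than the paper.

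The paper factors $\widehat H - tS = \widehat H^{1/2}\bigl(I_{2n} - t\widehat H^{-1/2}S\widehat H^{-1/2}\bigr)\widehat H^{1/2}$ and applies Sylvester's law to pass to $I_{2n} - t\,Q^*H^{-1/2}(\mathrm{i}J)H^{-1/2}Q$. It then cites the known fact that $H^{-1/2}(\mathrm{i}J)H^{-1/2}$ has eigenvalues $\pm 1/d_j(H)$, so the relevant eigenvalues are $1 \pm t/d_j(H)$.

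You instead congruence by the Williamson matrix $M$ itself, using that a real symplectic $M$ satisfies $M^*(\mathrm{i}J)M = \mathrm{i}J$. You then read off the eigenvalues $d_j \pm t$ from explicit $2\times 2$ blocks. The two computations differ only by a further diagonal congruence with $(D\oplus D)^{-1/2}$, which sends your block $B_j$ to one with eigenvalues $1 \pm t/d_j$. This is why the sign counts agree.

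What your route buys is self-containment. It needs only Williamson's theorem, the identity $M^\top J M = J$, and Sylvester's law. The paper's cited spectral fact is itself normally derived from Williamson's theorem. The paper's route is shorter once that fact is granted.

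Your closing remark is also right. Since $n_-$ does not count zero eigenvalues, the hypothesis that $t$ is not a symplectic eigenvalue is not needed for the identity; the same holds for the paper's argument. The hypothesis $t>0$, however, is genuinely used, to keep $d_j + t$ (or $1 + t/d_j$) positive.
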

    \begin{proof}
    Since $\widehat H-tS
         =\widehat H^{1/2}
         \left(I_{2n}-t\widehat H^{-1/2}S\widehat H^{-1/2}\right)
         \widehat H^{1/2}$,
    the Sylvester's law of inertia gives
    \begin{align}\label{eq:negative-eigenvalues-count}
        n_-\!\left(\widehat H-tS\right) = n_-\!\left(I_{2n}-t\widehat H^{-1/2}S\widehat H^{-1/2}\right).
    \end{align}
    We have $\widehat H^{-1/2}S\widehat H^{-1/2}=Q^* H^{-1/2} \mathrm{i}J H^{-1/2} Q$.
    Also, we know that the eigenvalues of $H^{-1/2} \mathrm{i}J H^{-1/2}$ are given by the inverses of $\pm d_j(H)$ for $j=1,\ldots, n$ (see, e.g., \cite{mishraderivatives}).
    Therefore, the eigenvalues of $I_{2n}-t\widehat H^{-1/2}S\widehat H^{-1/2}$ are given by
    \begin{align}
        1 \pm \dfrac{t}{d_j(H)}, \qquad j=1,\ldots, n.
    \end{align}
    This implies that the negative eigenvalues of $I_{2n}-t\widehat H^{-1/2}S\widehat H^{-1/2}$ correspond to the symplectic eigenvalues $d_j(H)$ that satisfy $d_j(H) < t$.
    We conclude the proof by using the relation \eqref{eq:negative-eigenvalues-count}.
    \end{proof}

    We declare that inequalities between vectors are understood componentwise throughout the paper.
    For a symmetric matrix $H \in \mathbb{R}^{2n \times 2n}$ in the block form
    \begin{align}\label{eq:h-block-form}
    H= \begin{psmallmatrix}
        H_{11} & H_{12} \\
        H_{12}^\top & H_{22}
    \end{psmallmatrix}
    \end{align}
    with each block of size $n \times n$, define a corresponding Hermitian matrix by
    \begin{align} \label{eq:corr-hermitian}
        \mathcal{H}(H) \coloneqq \dfrac{1}{2} \left( H_{11}+H_{22} + \mathrm{i}(H_{12}-H_{12}^\top) \right).
    \end{align}
    The Hermitianization of a symmetric matrix appears in the directional derivative formula~\eqref{eq:mishra-block-formula} of symplectic eigenvalues.
    The next result gives a relationship between the symplectic eigenvalues and ordinary eigenvalues of the Hermitianization of a positive definite matrix, which plays a key role in deriving the symplectic Lidskii theorem from the directional derivative Lidskii theorem. 
     \begin{theorem}\label{thm:hermitianization}
        For every $2n \times 2n$ real positive definite matrix $H$, we have
            \begin{equation}\label{eq:componentwise-main}
               d(H)
               \leq
               \lambda^\uparrow(\mathcal H(H)).
            \end{equation}
        Furthermore, equality holds in \eqref{eq:componentwise-main} if and only if $HJ = JH$.
    \end{theorem}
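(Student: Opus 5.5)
The plan is to compare inertias, using Lemma~\ref{lem:inertia-count} together with Cauchy interlacing for the compression of $\widehat H \coloneqq Q^*HQ$ to its first $n$ coordinates.

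\textbf{Identifying the diagonal blocks.} A direct computation with $W_\pm$ gives
\begin{align*}
W_+^*HW_+=\tfrac12\left(H_{11}+H_{22}-\mathrm{i}(H_{12}-H_{12}^\top)\right)=\overline{\mathcal H(H)},
\qquad
W_-^*HW_-=\mathcal H(H).
\end{align*}
So the leading $n\times n$ principal block of $\widehat H-tS$ is $\overline{\mathcal H(H)}-tI_n$. A Hermitian matrix and its complex conjugate have the same spectrum. Hence this block has exactly $\#\{j:\lambda_j^\uparrow(\mathcal H(H))<t\}$ negative eigenvalues.

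\textbf{The inequality \eqref{eq:componentwise-main}.} By Cauchy interlacing, a principal submatrix of a Hermitian matrix has at most as many negative eigenvalues as the full matrix. Combining this with Lemma~\ref{lem:inertia-count}, for every $t>0$ that is not a symplectic eigenvalue of $H$,
\begin{align*}
\#\{j:\lambda_j^\uparrow(\mathcal H(H))<t\}\le n_-(\widehat H-tS)=\#\{j:d_j(H)<t\}.
\end{align*}
Now suppose $\lambda_k^\uparrow(\mathcal H(H))<d_k(H)$ for some $k$. Choose $t$ strictly between these two numbers, avoiding the finitely many symplectic eigenvalues. Then the left side is at least $k$ and the right side is at most $k-1$, a contradiction. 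This proves $d(H)\le\lambda^\uparrow(\mathcal H(H))$.

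\textbf{Equality when $HJ=JH$.} In this case $H=\begin{psmallmatrix}a&b\\-b&a\end{psmallmatrix}$ with $a$ symmetric and $b$ skew-symmetric. Then $W_+^*HW_-=0$, so $\widehat H$ is block diagonal with blocks $\overline{\mathcal H(H)}$ and $\mathcal H(H)$. Since $\mathcal H(H)$ is a compression of $H$, it is positive definite, so $\mathcal H(H)+tI_n$ has no negative eigenvalues. Hence
\begin{align*}
n_-(\widehat H-tS)=n_-\bigl(\overline{\mathcal H(H)}-tI_n\bigr)+n_-\bigl(\mathcal H(H)+tI_n\bigr)=\#\{j:\lambda_j^\uparrow(\mathcal H(H))<t\}.
\end{align*}
By Lemma~\ref{lem:inertia-count}, the two counting functions agree for all admissible $t$, which forces $d(H)=\lambda^\uparrow(\mathcal H(H))$.

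\textbf{The converse, which I expect to be the main obstacle.} Assume $d(H)=\lambda^\uparrow(\mathcal H(H))$. Taking traces gives
\begin{align*}
\sum_j d_j(H)=\operatorname{tr}\mathcal H(H)=\tfrac12\operatorname{tr}H.
\end{align*}
I would invoke the known variational formula
\begin{align*}
2\sum_j d_j(H)=\min\{\operatorname{tr}(M^\top HM): M\ \text{symplectic}\},
\end{align*}
which holds because each $M^\top(D\oplus D)M$ has trace at least $2\operatorname{tr}D$. The equality above then says that the minimum is attained at $M=I_{2n}$.

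Next, perturb along the symplectic curve $M_\varepsilon=\exp(\varepsilon JK)$ with $K$ an arbitrary real symmetric matrix. First-order stationarity at $\varepsilon=0$ gives
\begin{align*}
\operatorname{tr}\bigl(K(J^\top H+HJ)\bigr)=\operatorname{tr}\bigl(K(HJ-JH)\bigr)=0 .
\end{align*}
The matrix $HJ-JH$ is symmetric, since $(HJ)^\top=-JH$. Because the identity holds for all symmetric $K$, we get $HJ=JH$.

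If citing the trace minimization is undesirable, there is a fallback. Take $t$ just above $d_n(H)$, where $n_-(\widehat H-tS)=n$, and use the equality in interlacing to force the off-diagonal block $W_+^*HW_-$ to vanish. That vanishing is equivalent to $H_{11}=H_{22}$ and $H_{12}=-H_{12}^\top$, i.e.\ $HJ=JH$. This route is more delicate, so I would use the variational argument.
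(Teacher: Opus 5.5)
Your proof of the inequality \eqref{eq:componentwise-main} is the paper's proof: Cauchy interlacing for the leading block $\overline{\mathcal H(H)}-tI_n$ of $\widehat H-tS$, then Lemma~\ref{lem:inertia-count}, then a $t$ chosen between $\lambda_k^\uparrow(\mathcal H(H))$ and $d_k(H)$.

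For the equality case you take a different route in both directions, and both are correct.

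\textbf{If $HJ=JH$.} The paper builds the orthosymplectic matrix $\begin{psmallmatrix}U&V\\-V&U\end{psmallmatrix}$ from a unitary $U+\mathrm{i}V$ that diagonalizes $\mathcal H(H)$. This matrix Williamson-diagonalizes $H$ explicitly. You instead observe that $E=W_+^*HW_-$ vanishes, so $\widehat H-tS$ is block diagonal with a positive definite lower block, and you rerun the inertia count with equality. Your version stays inside the machinery already set up; the paper's also produces the symplectic eigenbasis.

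\textbf{Only if.} The paper uses only the determinant. It has $\det H=\prod_j d_j(H)^2$, and the Schur complement gives $\det\widehat H\le(\det\mathcal H(H))^2$, with equality iff $E=0$. This is self-contained, needing only monotonicity of $\det$ in the Loewner order.

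You use only the trace, $\sum_j d_j(H)=\tfrac12\operatorname{tr}H$, together with the trace-minimization theorem and first-order stationarity along $\exp(\varepsilon JK)$. Your computations are right:
\begin{itemize}
\item $JK$ lies in the symplectic Lie algebra;
\item the derivative at $\varepsilon=0$ is $\operatorname{tr}(K(HJ-JH))$;
\item $HJ-JH$ is symmetric, so you may take $K=HJ-JH$.
\end{itemize}
The cost is an external theorem. Your reduction to $\operatorname{tr}(P^\top(D\oplus D)P)\ge 2\operatorname{tr}D$ for symplectic $P$ leaves that inequality itself unproved, but it follows in one line. The rows $r_k$ of $P$ satisfy $r_jJr_{n+j}^\top=1$ because $PJP^\top=J$. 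Cauchy--Schwarz then gives $\|r_j\|^2+\|r_{n+j}\|^2\ge 2\|r_j\|\|r_{n+j}\|\ge 2$. Finally, $\operatorname{tr}(P^\top(D\oplus D)P)=\sum_j d_j\bigl(\|r_j\|^2+\|r_{n+j}\|^2\bigr)$.

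As byproducts, your route shows that $\operatorname{tr}H=2\sum_j d_j(H)$ iff $HJ=JH$. The paper's route shows that $\det H=(\det\mathcal H(H))^2$ iff $HJ=JH$.

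\textbf{Your fallback.} As sketched, it would not work. For $t>d_n(H)=\lambda_n^\uparrow(\mathcal H(H))$, the block $\overline{\mathcal H(H)}-tI_n$ is negative definite, so $n_-(\widehat H-tS)=n$ whatever $E$ is. No information about $E$ comes out there. To make it work you would need the equality of counting functions near every eigenvalue of $\mathcal H(H)$, peeling off eigenvectors one at a time. Since you rely on the variational argument instead, this does not affect your proof.
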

    \begin{proof}
            % Since $\mathcal{H}(H)$ is a Hermitian matrix, proving the inequality \eqref{eq:componentwise-main} implicitly also proves that the matrix is positive definite matrix.
            % First observe that
            % \begin{align}
            %   \mathcal H(H)=W_-^*HW_-.
            % \end{align}
            % Since $H>0$ and $W_-$ has full column rank, $\mathcal H(H)>0$.
        Let $H$ be a $2n \times 2n$ real positive definite matrix and let
          $c_1\leq\cdots\leq c_n$ be the eigenvalues of its Hermitianization $\mathcal H(H)$.  
        % By Lemma~\ref{lem:complex-basis}, these are also the eigenvalues of $\mathcal H(H)$.
        Fix $t>0$ that is neither a symplectic eigenvalue of $H$ nor an eigenvalue of $\mathcal{H}(H)$.  
        It is easy to verify that $\widehat H$ has the following block structure: 
        \begin{align}\label{eq:hath-block-form}
            \widehat H = 
                \begin{bNiceMatrix}[margin]
                  \overline{\mathcal{H}(H)} & E \\
                  E^* & {\mathcal{H}(H)}
                \end{bNiceMatrix},
        \end{align}
        where $E = \frac{1}{2} (H_{11}-H_{22}+\mathrm{i}(H_{12}+H_{12}^\top))$.
        ${\mathcal{H}(H)}$ being a principal submatrix of $\widehat H$ is a Hermitian positive definite matrix.
        On the other hand,
        \begin{align}
             \widehat H-tS
             =\begin{bmatrix}
             \overline{\mathcal{H}(H)}-tI_n&E\\
             E^*& {\mathcal{H}(H)} +tI_n
         \end{bmatrix}.
        \end{align}
        We know that ${\mathcal{H}(H)}$ and $\overline{\mathcal{H}(H)}$ being Hermitian complex-conjugates of each other have the same eigenvalues.
        Since $\overline{\mathcal{H}(H)}-tI_n$ is a principal submatrix of $\widehat H - tS$, Cauchy's interlacing theorem implies that
        \begin{align}\label{eq:schur-inertia}
             n_-(\widehat H-tS)
             &\geq n_-(\overline{\mathcal{H}(H)}-tI_n) \\
             &= n_-({\mathcal{H}(H)}-tI_n).\label{eq:neg-eig-count-hat-h-st}
        \end{align}
        The relation \eqref{eq:neg-eig-count-hat-h-st} thus gives
        \begin{align}\label{eq:neg-eig-ineq}
            n_-(\widehat H-tS)
             &\geq \#\{j:c_j<t\}.
        \end{align}
        Lemma~\ref{lem:inertia-count} states that $n_-(\widehat H-tS)
             =\#\{j:d_j(H)<t\}$.
        Together with~\eqref{eq:neg-eig-ineq}, we have
        \begin{equation}\label{eq:count-comparison}
         \#\{j:d_j(H)<t\}
         \geq
         \#\{j:c_j<t\}
        \end{equation}
        for all such $t$.
    
        Suppose for contradiction that $d_j(H)>c_j$ for some $j$.  
        Choose $t$ with $c_j<t<d_j(H)$
        and away from the two finite spectra.  
        Then $\#\{\ell:c_\ell<t\}\geq j$ whereas $\#\{\ell:d_\ell(H)<t\}\leq j-1$
        contradicting~\eqref{eq:count-comparison}.  
        Therefore
        \begin{align}
           d_j(H)\leq c_j
           =\lambda_j^\uparrow(\mathcal H(H)), \qquad j=1,\ldots, n.
        \end{align}
    
        We now prove the necessary and sufficient condition for the equality to hold in \eqref{eq:componentwise-main}.
        If $H$ commutes with $J$ then the blocks of $H$ satisfy $H_{11}=H_{22}$ and $H_{12}^\top = -H_{12}$.
        If a unitary matrix $U+\mathrm{i}V$, with $U, V \in \mathbb{R}^{n \times n}$ such that $(U+\mathrm{i}V)^* \mathcal{H}(H)(U+\mathrm{i}V)= D$ is a diagonal matrix then we have 
        \begin{align}
            \begin{pmatrix}
            U & V \\
            -V & U
        \end{pmatrix}^\top H 
        \begin{pmatrix}
            U & V \\
            -V & U
        \end{pmatrix}
        = \begin{pmatrix}
            D & 0 \\
            0 & D
        \end{pmatrix}.
        \end{align}
        This gives $d(H)=\lambda^\uparrow(\mathcal{H}(H))$.
    
        Conversely, suppose that the symplectic eigenvalues of $H$ are the same as the eigenvalues of $\mathcal{H}(H)$.
        % We already know that
        % \begin{align}
        % d_j(H)\leq \lambda_j^\uparrow(\mathcal{H}(H)),
        % \qquad j=1,\ldots,n.
        % \end{align}
        By Williamson's theorem and the fact that symplectic matrices have determinant equal to one \cite{dms}, we have
        \begin{align}\label{eq:det-h-symp-eig}
            \det \widehat H = \det H =
            \prod_{j=1}^n d_j(H)^2.
        \end{align}
        Taking the Schur complement with respect to the lower-right block of \eqref{eq:hath-block-form} yields
        \begin{align}
             \det \widehat H
                &=
                \det\mathcal{H}(H)\,
                \det\left(
                \overline{\mathcal{H}(H)}
                -
                E\mathcal{H}(H)^{-1}E^*
                \right) \\
                &\leq \det\mathcal{H}(H)
                \det
                \overline{\mathcal{H}(H)} \label{eq:schur-det-ineq}\\
                &= \det\mathcal{H}(H)^2 \\
                &= \prod_{j=1}^n \lambda_j^{\uparrow}(\mathcal{H}(H))^2.
                \label{eq:schur-det}
        \end{align}
        The assumption $d(H)=\lambda^\uparrow(\mathcal{H}(H))$ thus forces the inequality \eqref{eq:schur-det-ineq} to be equality, which is possible only if $E=0$.
        We then have $H_{11}=H_{22}$ and $H_{12}=-H_{12}^\top$, which implies that $H$ commutes with $J$.
    \end{proof}

    \begin{corollary}\label{cor:inequality-dir-der}
        For every positive definite matrix $H\in\mathbb{R}^{2n\times 2n}$,
        \begin{equation}\label{eq:identity-componentwise}
           d(H) \leq d'(I_{2n};H).
        \end{equation}
    \end{corollary}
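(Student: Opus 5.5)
The corollary should follow by combining Theorem~\ref{thm:hermitianization} with the block formula \eqref{eq:mishra-block-formula} evaluated at the base point $A=I_{2n}$. The plan is to identify $d'(I_{2n};H)$ with $\lambda^\uparrow(\mathcal H(H))$. Then \eqref{eq:componentwise-main} gives \eqref{eq:identity-componentwise} immediately.

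First, I determine the symplectic structure of $I_{2n}$. The identity is already in Williamson normal form: taking $M=I_{2n}$, which is symplectic since $I^\top J I=J$, we get $M^\top I_{2n}M=I_n\oplus I_n$, so $D=I_n$. Hence $d_j(I_{2n})=1$ for all $j$. There is a single distinct symplectic eigenvalue $\mu_1=1$, and the partition is $\pi_{I_{2n}}=\{I_1\}$ with $I_1=\{1,\ldots,n\}$.

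Next, I compute the symplectic compression. With $M=I_{2n}$, the columns $u_j,v_j$ are the standard basis vectors $e_j,e_{n+j}$. The matrix $M_1=[u_1,\ldots,u_n,v_1,\ldots,v_n]$ is therefore $I_{2n}$ itself, so $M_1^\top H M_1=H$. The blocks in \eqref{eq:block-compression} are then exactly $H_{11},H_{12},H_{22}$ from \eqref{eq:h-block-form}. Comparing \eqref{eq:local-hermitianization} with \eqref{eq:corr-hermitian} gives $\mathcal H_1(H)=\mathcal H(H)$.

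Applying \eqref{eq:mishra-block-formula} to the single block $I_1$ then yields $d'(I_{2n};H)=\lambda^\uparrow(\mathcal H(H))$. Combining this with Theorem~\ref{thm:hermitianization} gives $d(H)\le d'(I_{2n};H)$ componentwise.

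There is no substantive obstacle. The one point to check is that the formula \eqref{eq:mishra-block-formula} is independent of which symplectic diagonalizing matrix $M$ is chosen, so that using $M=I_{2n}$ is legitimate. This holds because $d'(A;H)$ is defined intrinsically by the limit \eqref{eq:dir-deri-symplectic}, so any admissible $M$ gives the same left-hand side. Consistent with this, any other choice $M=\begin{psmallmatrix}U&V\\-V&U\end{psmallmatrix}$ (orthosymplectic) only conjugates $\mathcal H(H)$ unitarily by $U+\mathrm iV$, which preserves its eigenvalues. As a byproduct, the equality case of Theorem~\ref{thm:hermitianization} shows that equality holds in \eqref{eq:identity-componentwise} exactly when $HJ=JH$.
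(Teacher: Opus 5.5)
Your proposal is correct and follows the paper's own proof: evaluate the block formula at the base point $I_{2n}$ to get $d'(I_{2n};H)=\lambda^\uparrow(\mathcal H(H))$ (a single block, with $M=I_{2n}$), then invoke Theorem~\ref{thm:hermitianization}. Your extra check that the choice of diagonalizing $M$ does not matter (an orthosymplectic change only conjugates $\mathcal H(H)$ by the unitary $U+\mathrm{i}V$) is a valid detail that the paper leaves implicit.
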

    \begin{proof}
        It follows directly from Theorem~\ref{thm:hermitianization} and the directional derivative expression $d'(I_{2n}; H)= \lambda^\uparrow (\mathcal{H}(H))$.
    \end{proof}

    % The componentwise comparison can be transported from the identity to an arbitrary base point at the cost of passing from coordinatewise order to weak supermajorization.

    We are now set to show that the symplectic Lidskii theorem comes as a special case of Theorem~\ref{thm:directional-lidskii}.
    \begin{corollary}[Symplectic Lidskii]\label{cor:symp-lidskii}
            Let $H, K$ be $2n \times 2n$ real positive definite matrices.
            We have
            \begin{align} \label{eq:symp-lidsii}
                d(H+K)-d(H) \prec^w d(K).
            \end{align}
        \end{corollary}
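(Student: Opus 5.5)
The plan is to integrate the directional derivatives along the segment from $H$ to $H+K$. Pointwise along the path, the Hermitianization inequality of Theorem~\ref{thm:hermitianization} will supply the weak-supermajorization bound.

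Set $A_t \coloneqq H+tK$ for $t\in[0,1]$; each $A_t$ is positive definite. Symplectic eigenvalues are locally Lipschitz on the cone of positive definite matrices (standard perturbation bounds, e.g.\ from \cite{mishraderivatives}). Hence each map $t\mapsto d_j(A_t)$ is Lipschitz on $[0,1]$ and therefore absolutely continuous. At almost every $t$ it is differentiable, and there its derivative coincides with the right derivative $d_j'(A_t;K)$. This gives
\begin{align}
    d(H+K)-d(H)=\int_0^1 d'(A_t;K)\,dt .
\end{align}
The map $x\mapsto \sum_{j=1}^k x_j^\uparrow$ is concave and positively homogeneous. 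It therefore suffices to prove, for every $t$, the pointwise bound
\begin{align}
    d'(A_t;K)\prec^w d(K).
\end{align}
Indeed, concavity (Jensen for integrals) then yields
\begin{align}
    \sum_{j\le k}\Bigl(\int_0^1 d'(A_t;K)\,dt\Bigr)^\uparrow_j \;\geq\; \int_0^1 \sum_{j\le k} d'(A_t;K)^\uparrow_j\,dt \;\geq\; \sum_{j\le k} d_j(K).
\end{align}

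For the pointwise bound, fix $A=A_t$ with symplectic diagonalizer $M$ and set $\widetilde K\coloneqq M^\top K M$. Since $M$ is symplectic, $\widetilde K$ is positive definite with $d(\widetilde K)=d(K)$. By \eqref{eq:mishra-block-formula}, the entries of $d'(A;K)$ are the eigenvalues of $\bigoplus_\alpha \mathcal H_\alpha(K)$. Because the index sets $I_\alpha$ partition $\{1,\dots,n\}$, each $\mathcal H_\alpha(K)$ is exactly the principal submatrix of $\mathcal H(\widetilde K)$ indexed by $I_\alpha$. So $\bigoplus_\alpha\mathcal H_\alpha(K)$ is a block pinching of $\mathcal H(\widetilde K)$, and the pinching inequality gives
\begin{align}
    d'(A;K)\prec \lambda^\uparrow(\mathcal H(\widetilde K)).
\end{align}
By \eqref{eq:pi-implies} this implies weak supermajorization. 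Finally, Theorem~\ref{thm:hermitianization} gives $\lambda^\uparrow(\mathcal H(\widetilde K))\geq d(\widetilde K)=d(K)$ componentwise, and a componentwise inequality of increasingly ordered vectors trivially gives $\prec^w$. Chaining these relations yields $d'(A;K)\prec^w d(K)$.

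In the spirit of Theorem~\ref{thm:directional-lidskii}, one can alternatively phrase the first step as applying the theorem at base point $A_t$ with directions $tK$ and $K$. This reduces the problem to the same bound on $d'(A_t;K)$. The case $H$ arbitrary and $t=0$ near the identity recovers Corollary~\ref{cor:inequality-dir-der}.

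The main obstacle I expect is the analytic step of justifying the integral representation. This requires that the one-sided directional derivative agrees with the a.e.\ derivative of a Lipschitz function, and that the eigenvalue curves are regular enough to be integrated. I would handle this by citing Lipschitz continuity of $d(\cdot)$ (e.g.\ $|d_j(A)-d_j(B)|\le C\|A-B\|$ on compact subsets of the positive definite cone) together with Rademacher's theorem in one variable. The algebraic pinching step is routine once one observes that the $M_\alpha$ together exhaust the columns of $M$.
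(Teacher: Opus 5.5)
Your argument is correct, but it takes a genuinely different route from the paper.

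\textbf{The paper's route.} The paper works at a single, specially chosen base point. It takes $M$ to be a Williamson diagonalizer of $H+K$ and sets $A \coloneqq M^{-\top}M^{-1}$. Then $M^\top A M = I_{2n}$, so all symplectic eigenvalues of $A$ equal $1$ and $\pi_A$ consists of one block. Theorem~\ref{thm:directional-lidskii} therefore gives ordinary majorization $d'(A;H+K)-d'(A;H)\prec d'(A;K)$. Simultaneous diagonalization gives $d'(A;H+K)=d(H+K)$ exactly. Using $d'(A;X)=d'(I_{2n};M^\top X M)$, Theorem~\ref{thm:hermitianization} gives $d'(A;H)\ge d(H)$ and $d'(A;K)\ge d(K)$ componentwise, which convert $\prec$ into $\prec^w$.

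\textbf{Your route.} You instead prove an infinitesimal statement: $d'(A;K)\prec^w d(K)$ at every positive definite base point $A$ and for every positive definite direction $K$. This uses the pinching inequality for $\mathcal H(M^\top K M)$, together with Theorem~\ref{thm:hermitianization} applied only to $K$. You then integrate along $H+tK$, using absolute continuity and the concavity of $x\mapsto\sum_{j\le k}x_j^\uparrow$.

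\textbf{Comparison.} The paper's argument is purely algebraic, with no Lipschitz estimates or a.e.\ differentiability. More to the point of the paper, it exhibits the symplectic Lidskii theorem as a consequence of the block-majorization Theorem~\ref{thm:directional-lidskii}. Your argument never uses that theorem, only the block formula \eqref{eq:mishra-block-formula}, so it does not serve that purpose. In exchange, it yields the stronger pointwise derivative bound, and it is closer in spirit to the classical derivative-and-integrate proofs of Lidskii-type inequalities.

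\textbf{Side remarks to drop.} By positive homogeneity, Theorem~\ref{thm:directional-lidskii} at $A_t$ with directions $tK$ and $K$ reads $d'(A_t;K)\prec_{\pi_{A_t}} d'(A_t;K)$, which is vacuous. At $A=I_{2n}$, your pointwise bound gives only $d'(I_{2n};K)\prec^w d(K)$. That is weaker than the componentwise Corollary~\ref{cor:inequality-dir-der} and is derived from it, so it does not recover it.
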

    \begin{proof}
        Choose a symplectic matrix \(M\) which gives a Williamson diagonalization of \(H+K\), with the symplectic eigenvalues arranged in non-decreasing order. 
        Choose $A \coloneqq M^{-T}M^{-1}$, which has equal symplectic eigenvalues so that $\prec_{\pi_A} = \prec$.
        By Theorem~\ref{thm:directional-lidskii}, we thus have 
        \begin{align}\label{eq:dir-der-lidskii}
            d'(A;H+K)-d'(A;H) \prec d'(A;K).
        \end{align}
        We first observe that
        \begin{align} \label{eq:dprime-h+k}
            d'(A;H+K)=d(H+K).
        \end{align}
        which follows directly from the first principle~\eqref{eq:dir-deri-symplectic} and the fact that both $A$ and $H+K$ are simultaneously diagonalizable in the sense of Williamson's theorem.
        Using the fact $M^\top A M = I_{2n}$ and Corollary~\ref{cor:inequality-dir-der}, we thus get
        \begin{align} \label{eq:dprime-ah-inequality}
            d'(A; H) 
                &= d'(I_{2n}; M^\top H M) \geq d(M^\top H M) = d(H).
        \end{align}
        Substituting from \eqref{eq:dprime-h+k} and \eqref{eq:dprime-ah-inequality} into \eqref{eq:dir-der-lidskii}, we thus get
        \begin{align}\label{eq:second-last-lidskii-step}
            d(H+K)- d(H) \prec^w d'(A;K).
        \end{align}
        Here we used the fact that if $x \prec y$ and $z \geq x$ then $z \prec^w y$.
        The inequality in \eqref{eq:dprime-ah-inequality} also gives $d'(A;K) \geq d(K)$.
        Applying this inequality to \eqref{eq:second-last-lidskii-step} thus gives the desired relation
        \begin{align}
            d(H+K)- d(H) \prec^w d(K).
        \end{align}
    \end{proof}

\section*{Conclusion}
    We established a Lidskii type theorem for directional derivatives of symplectic eigenvalues that involves block majorization with respect to the partition induced by the symplectic eigenvalues of the base matrix.
    As a result that is also of independent interest, we proved a componentwise inequality between the symplectic eigenvalues of a positive definite matrix and the ordinary eigenvalues of its Hermitianization; and we also showed that the precise condition for the equality case is that the matrix commutes with $J$.
    By using the aforementioned inequality, we showed that the symplectic Lidskii theorem follows from the directional derivative Lidskii theorem.

\section*{Acknowledgment}
    HKM acknowledges support from FRS Project No.~MISC~0147.
    OpenAI’s ChatGPT (GPT-5.6 Sol) was used to formulate the proof of Theorem~\ref{thm:hermitianization} which was instrumental in deriving the Lidskii theorem from the directional derivative Lidskii theorem; and it was also used to improve the presentation of the paper.

\bibliography{reference}

\end{document}